\newcommand{\Z}{\ensuremath{\mathbb{Z}}}
\newcommand{\Q}{\ensuremath{\mathbb{Q}}}
\newcommand{\F}{\ensuremath{\mathbb{F}}}
\theoremstyle{plain}
\newtheorem{theorem}{Theorem}[section]
\newtheorem{lemma}[theorem]{Lemma}
\newtheorem{proposition}[theorem]{Proposition}
\newtheorem{conjecture}[theorem]{Conjecture}
\theoremstyle{definition}
\newtheorem{definition}[theorem]{Definition}
\newtheorem{remark}[theorem]{Remark}
\DeclareMathOperator{\Spec}{Spec}
\DeclareMathOperator{\Pic}{Pic}
\DeclareMathOperator{\Cl}{Cl}
\DeclareMathOperator{\rk}{rk}
\DeclareMathOperator{\car}{char}
\newcommand{\PP}{\mathbb{P}}
\newcommand{\E}{\mathcal{E}}
\newcommand{\NN}{\mathcal{N}}
\newcommand{\OO}{\mathcal{O}}
\begin{document}

\title{Elliptic surfaces over $\PP^1$ and large class groups of number fields}

\author{Jean Gillibert \and Aaron Levin\thanks{Supported in part by NSF grant DMS-1352407.}
}

\date{May 2019}

\maketitle

\begin{abstract}
Given a non-isotrivial elliptic curve over $\Q(t)$ with large Mordell-Weil rank, we explain how one can build, for suitable small primes $p$, infinitely many fields of degree $p^2-1$ whose ideal class group has a large $p$-torsion subgroup. As an example, we show the existence of infinitely many cubic fields whose ideal class group contains a subgroup isomorphic to $(\Z/2\Z)^{11}$.
\end{abstract}




\section{Introduction}


Our initial motivation for the present paper is the following conjecture on class groups of number fields, which belongs to folklore, and is a consequence of the Cohen-Lenstra heuristics. If $K$ is a number field, we denote by $\OO_K$ its ring of integers, and by $\Cl(K)$ its ideal class group.

\begin{conjecture}
\label{conj1}
Let $k$ be a number field, let $p$ be a prime number, and let $d>1$ be an integer. Then $\dim_{\F_p} \Cl(L)[p]$ is unbounded when $L/k$ runs through all extensions of degree $[L:k]=d$.
\end{conjecture}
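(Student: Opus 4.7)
The conjecture is open in the stated generality, so I can only outline a strategy extending the paper's and indicate where the essential difficulty lies. The guiding idea is to replace the non-isotrivial elliptic curve over $\Q(t)$ of large Mordell-Weil rank with some commutative algebraic group $G$ over $k(t)$ whose arithmetic produces extensions of degree exactly $d$ over $k$, and then to specialize.

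Concretely, for each triple $(k,p,d)$ one seeks an algebraic group $G/k(t)$ satisfying three properties: (a) the $\F_p$-vector space $G(k(t))/pG(k(t))$ can be made arbitrarily large, possibly after allowing finite base changes of $k$; (b) for a generic $P\in G(k(t))$, the field $k(t,\tfrac{1}{p}P)$ obtained by adjoining a $p$-division point has degree exactly $d$ over $k(t)$; (c) the $p$-descent map sending $P$ to a class in $\Cl(L_{t_0})[p]$, where $L_{t_0}/k$ is the degree-$d$ extension cut out by a $p$-division point after a specialization $t\mapsto t_0$, has bounded kernel as $G$ varies. Assuming such a $G$, I would apply a Silverman--N\'eron type specialization theorem to find infinitely many $t_0\in k$ for which the specialization $G_{t_0}$ inherits the independent $\F_p$-classes, and then run a Kummer-theoretic $p$-descent to transfer these classes into $\Cl(L_{t_0})[p]$. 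Allowing the construction to scale (by enlarging $G$, passing to a cover, or iterating base changes that preserve the degree of the division field) would then push $\dim_{\F_p}\Cl(L_{t_0})[p]\to\infty$, yielding the conjecture.

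The main obstacle is simultaneously achieving (a) and (b). For $d=p^2-1$ the paper exploits elliptic curves with generic mod-$p$ Galois image, whose $p$-torsion defines a $GL_2(\F_p)$-extension so that a generic $p$-division point cuts out precisely a degree-$(p^2-1)$ field. For general $d$, one must look instead to abelian varieties of higher dimension, to algebraic tori, or to semi-abelian varieties whose $p$-division fields have degree $d$; but the divisibility constraints on $d$ forced by natural candidates (orders like $|GL_g(\F_p)|$, or cyclotomic degrees dividing $p-1$) do not cover every $d>1$, and even when they do, producing unbounded Mordell-Weil rank while holding the division-field degree fixed requires substantial new geometric input. Controlling (c)---ruling out that descent classes are cancelled by units or absorbed into localizations of $\Cl$ at primes of bad reduction of $G$---becomes increasingly delicate in the higher-dimensional setting, especially when the residue field of $k$ is not $\Q$. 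It is the combined rigidity of (a) and (b), rather than the specialization or descent machinery, that remains the bottleneck, and explains why Conjecture~\ref{conj1} is still open beyond small values of $d$ and $p$.
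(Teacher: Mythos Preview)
The statement you were asked to prove is Conjecture~\ref{conj1}, which the paper explicitly presents as an \emph{open conjecture}: it notes that the case $p\mid d$ follows from class field theory, but that ``when $p$ and $d$ are coprime, there is not a single case where Conjecture~\ref{conj1} is known to hold.'' There is therefore no proof in the paper to compare your proposal against; the paper's contribution is partial progress in the special case $d=p^2-1$ via Theorem~\ref{thm:Main}, not a proof of the conjecture itself.

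You correctly recognize this and, rather than attempting a bogus proof, you outline a speculative programme generalizing the paper's elliptic-surface method to other commutative group schemes over $k(t)$. That is an honest and reasonable response. Your identification of the central obstruction --- simultaneously forcing unbounded $G(k(t))/pG(k(t))$ while pinning the degree of the $p$-division field at a prescribed $d$ --- is accurate, and your remark that natural candidates (higher-dimensional abelian varieties, tori) impose divisibility constraints on $d$ through orders such as $|GL_g(\F_p)|$ or $p-1$ is to the point. One small omission: you do not mention that the case $p\mid d$ is already settled by class field theory (as the paper notes, citing \cite{brumer65,RZ,levin07}), so the genuinely open regime is $\gcd(p,d)=1$; this sharpens where your conditions (a)--(b) actually bite. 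Beyond that, there is nothing to correct: your proposal is not a proof, does not pretend to be one, and the paper contains no proof with which it could conflict.
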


When $d=p$, and more generally when $p$ divides $d$, this conjecture follows from class field theory \cite{brumer65,RZ} (or see \cite[Th.~3.4]{levin07} for a proof aligned with the techniques of the present paper).  On the other hand, when $p$ and $d$ are coprime,
there is not a single case where Conjecture~\ref{conj1} is known to hold. For example, given a prime $p$, it is known \cite{Yam} that there exist infinitely many imaginary quadratic fields $L/\Q$ such that $\dim_{\F_p}\Cl(L)[p]\geq 2$. For $p\geq 7$ this is currently the best known result on $p$-ranks of class groups of quadratic fields.

In the present paper, we investigate this conjecture in the specific case when $d=p^2-1$. Our strategy is closely related to the techniques developed in \cite{gl12} and \cite{bg16}. The main new ingredient is the following result of \cite{gl18a}: given a non-isotrivial elliptic surface over $k(t)$ with large rank, for almost all primes $p$ one is able to produce a curve $C$ which admits a morphism $C\to \PP^1_k$ of degree $p^2-1$, and whose Picard group has large $p$-rank (see Theorem~\ref{thm_kummer}).  This construction allows one to use large Mordell-Weil rank over $\mathbb{Q}(t)$ to produce number fields of degree $p^2-1$ whose ideal class group has large $p$-rank.  In particular, we show the existence of infinitely many cubic fields whose ideal class group has $2$-rank $r\geq 11$.  This improves on a result of Kulkarni \cite{kulkarni18}, who proved this statement with $r\geq 8$ (see Remark \ref{rmq:2rank}).


\subsection{The main result}
\label{sec:mainresult}

Let $k$ be a number field, and let $E$ be an elliptic curve over $k(t)$. We denote by $\E\to\PP^1_k$ the N{\'e}ron model of $E$ over $\PP^1_k$, by which we mean the group scheme model, which is the smooth locus of N{\'e}ron's minimal regular model (see \cite[\S{}1.5]{NeronModels} or \cite[\S{}10.2]{liu}).

By abuse of notation, we identify closed points of $\PP^1_k$ and discrete valuations of $k(t)$.
If $v\in \PP^1_k$ is such a point, we denote by $\E_v$ the special fiber of $\E$ at $v$, and by $\Phi_v$ the group of connected components of $\E_v$. The \emph{Tamagawa number} of $E$ at $v$ is by definition the order of $\Phi_v(k_v)$, and we denote it by $c_v$. If $E$ has good reduction at $v$, then $\Phi_v=\{0\}$ and $c_v=1$. The set of places of bad reduction being finite, we have that $c_v=1$ for all but finitely many $v$.

\begin{theorem}
\label{thm:Main}
Let $k$ be a number field and let $E$ be an elliptic curve over $k(t)$. Assume that $E$ does not have universal bad reduction at any prime of $k$ (Definition \ref{dfn:UBR}). Let $p$ be a prime number such that $E[p]$ is irreducible as a Galois module over $\overline{k}(t)$.
\begin{enumerate}
\item[1)] If $p\geq 3$, there exist infinitely many field extensions $L/k$ of degree $p^2-1$ such that
\begin{equation}
\label{classbound1}
\dim_{\F_p} \Cl(L)[p]-\dim_{\F_p} \Cl(k)[p] \geq \rk_\Z E(k(t)) - \#\{v\in \PP^1_k, p|c_v\} - \rk_\Z \mathcal{O}_L^{\times} + \rk_\Z \mathcal{O}_k^{\times}.
\end{equation}
\item[2)] If $p=2$, there exist infinitely many field extensions $L/k$ of degree $3$ such that
\begin{equation}
\label{classbound2}
\begin{split}
\dim_{\F_2} \Cl(L)[2] -\dim_{\F_2} &\Cl(k)[2] \geq \rk_\Z E(k(t)) -\#\{v\in \PP^1_k, 2|c_v\} -\rk_\Z \mathcal{O}_L^{\times} +\rk_\Z \mathcal{O}_k^{\times}\\
&-\#\{v\in \PP^1_k, \text{the red. type of $\E$ at $v$ is $\mathrm{I}_{2n}^*$ for some $n\geq 0$} \}.
\end{split}
\end{equation}
\end{enumerate}
\end{theorem}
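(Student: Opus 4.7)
The plan is to convert the geometric input---a large $p$-rank in the Picard group of an auxiliary curve---into the desired arithmetic output through specialization. The first step is to invoke Theorem~\ref{thm_kummer} directly: under the hypotheses of irreducibility of $E[p]$ and no universal bad reduction, it provides a smooth projective curve $C/k$ together with a morphism $\pi : C \to \PP^1_k$ of degree $p^2-1$ (resp.\ $3$ when $p=2$), such that $\dim_{\F_p} \Pic(C)[p]$ is bounded below by $\rk_\Z E(k(t)) - \#\{v : p \mid c_v\}$, with the additional $\mathrm{I}_{2n}^*$ correction in the $p=2$ case. This reduces the problem to transferring $p$-torsion from $\Pic(C)$ to $\Cl(L)$ for $L$ arising as residue fields of fibers of $\pi$.

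Next, applying Hilbert's irreducibility theorem to $\pi$, whose large geometric monodromy reflects the irreducibility of $E[p]$, yields infinitely many specializations $t_0 \in \PP^1(k)$ for which the fiber $\pi^{-1}(t_0) = \Spec L$ is irreducible, providing pairwise non-isomorphic field extensions $L/k$ of the required degree. For each such $L$ I would exhibit a specialization map
\[
\Pic(C)[p] \longrightarrow \Cl(L)[p]/\Cl(k)[p]
\]
constructed as follows: a class $[D] \in \Pic(C)[p]$ with $pD = \operatorname{div}(f)$, $f \in k(C)^\times$, restricts along $\Spec L \hookrightarrow C$ to a fractional $\mathcal{O}_L$-ideal $I_D$ satisfying $I_D^p = (f(t_0))$, so $[I_D] \in \Cl(L)[p]$. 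Via the Kummer description the kernel of this map embeds into $\mathcal{O}_L^\times / \mathcal{O}_k^\times (\mathcal{O}_L^\times)^p$, whose $\F_p$-dimension is $\rk_\Z \mathcal{O}_L^\times - \rk_\Z \mathcal{O}_k^\times$ up to small torsion contributions. Combining this with the Picard-group bound from Theorem~\ref{thm_kummer} produces \eqref{classbound1} and \eqref{classbound2}.

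The main obstacle is ensuring the specialization map has kernel of at most this size for \emph{infinitely} many $t_0$. For each $f$ arising from a finite generating set of $\Pic(C)[p]$, the danger is that $f(t_0)$ becomes an unexpected $p$-th power in $L^\times$, collapsing some $[I_D]$ to zero in $\Cl(L)$. I would rule this out by a further Hilbert-irreducibility-type exclusion applied to the covers of $\PP^1_k$ obtained by adjoining $p$-th roots of these finitely many $f$'s to $k(C)$; the complement of the resulting thin set still contains infinitely many $t_0 \in k$, and on this set the required injectivity-modulo-units holds, completing the proof.
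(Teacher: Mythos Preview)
Your overall shape is right---use Theorem~\ref{thm_kummer} to get many $p$-torsion classes in $\Pic(C)$, then specialize along fibers of $C\to\PP^1_k$ and control the kernel via Hilbert irreducibility---but there is a genuine gap at the step where you pass from $\Pic(C)[p]$ to $\Cl(L)[p]$, and it is exactly the place where the paper uses the ``no universal bad reduction'' hypothesis (which you invoke in the wrong spot: Theorem~\ref{thm_kummer} does not need it).

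The problem is your sentence ``$[D]$ \dots\ restricts along $\Spec L\hookrightarrow C$ to a fractional $\mathcal{O}_L$-ideal $I_D$ satisfying $I_D^p=(f(t_0))$''. There is no natural map $\Pic(C)[p]\to\Cl(L)[p]$: specializing a $\mu_p$-torsor on $C$ at a closed point gives only a class in $H^1(L,\mu_p)$, and after spreading out over an integral model $\mathcal{C}_T$ of $C$ the best one gets for free is that the image lies in $H^1(\mathcal{O}_{L,T},\mu_p)$ for a fixed finite set $T$ of primes of $k$ (Chevalley--Weil). Equivalently, your relation $I_D^p=(f(t_0))$ holds only up to primes above $T$, so $[I_D]$ is $p$-torsion in the $T$-class group, not in $\Cl(L)$. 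Your kernel analysis then really bounds a quotient of $\mathcal{O}_{L,T}^\times$, which is larger than $\mathcal{O}_L^\times$ by $\#T_L$ and destroys the inequality.

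The paper repairs this via Theorem~\ref{thmclassgp}: for each $\mathfrak{p}\in T$ one uses the absence of universal bad reduction to pick $t_\mathfrak{p}\in\PP^1(k)$ with $E_{t_\mathfrak{p}}$ of good reduction at $\mathfrak{p}$; then Kummer theory on the abelian scheme N\'eron model forces the specialized torsor to be integral at every place above $\mathfrak{p}$. A Krasner-type argument shows the same holds for all $t$ that are $\mathfrak{p}$-adically close to $t_\mathfrak{p}$, and one then replaces $\phi$ by an affine change of variable $\psi$ so that integer values of the new parameter satisfy these congruences simultaneously. Only after this does the image land in $H^1(\mathcal{O}_L,\mu_p)$, and then the unit/class-group bookkeeping you sketch (plus a linear-disjointness trick to make $H^1(\mathcal{O}_k,\mu_p)\hookrightarrow H^1(\mathcal{O}_L,\mu_p)$) gives \eqref{classbound1} and \eqref{classbound2}. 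Your Hilbert-irreducibility step for injectivity is essentially what the paper does, but it cannot substitute for this integrality argument.
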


The proof is given in \S{}\ref{sec:avoidingUBR}. The assumption that $E$ does not have universal bad reduction at any prime of $k$ is a technical condition detailed in \S{}\ref{sec:avoidingUBR}. Given a Weierstrass equation for the elliptic curve $E$, this condition can be verified numerically.

If $E$ is not isotrivial, then $E[p]$ is irreducible as a Galois module over $\overline{k}(t)$ for all but finitely many primes $p$. If in addition $E$ does not have universal bad reduction at any prime of $k$, then the conclusion of Theorem~\ref{thm:Main} holds for all but finitely many primes $p$.

\begin{remark}
When $p$ gets large, the term $\rk_\Z \mathcal{O}_L^{\times}$ also gets large: according to Dirichlet's unit theorem, it is bounded below by $\frac{p^2-1}{2}-1$. Therefore, given an elliptic surface $E$, Theorem~\ref{thm:Main} should be applied to primes $p$ which are small with respect to the rank of $E$.
\end{remark}

\begin{remark}
When $p=2$, the condition that $E$ does not have universal bad reduction at any prime of $k$ can be replaced by the condition that its N{\'e}ron model $\E$ has a fiber of type $\mathrm{II}$, $\mathrm{II}^*$, $\mathrm{IV}$ or $\mathrm{IV}^*$ at some $v\in \PP^1(k)$. In fact, we obtain in this case a stronger version of \eqref{classbound2}, in which the contribution from the units is removed; see Proposition~\ref{prop2}.
\end{remark}

\begin{remark}
The fields $L/k$ of degree $p^2-1$ in \eqref{classbound1} have in fact a canonical subfield $L'$ with $[L':k]=p+1$. In specific situations, the equation of $E$ being given, one may perform additional computations in order to obtain a lower bound on $\dim_{\F_p} \Cl(L')[p]$. See Remark~\ref{rmq:Aaron} for details.
\end{remark}


\subsection{Example: cubic fields}

Applying Theorem~\ref{thm:Main} to an elliptic curve over $\Q(t)$ with large rank constructed by Kihara \cite{Kih}, we obtain the following result (see \S{}\ref{sec:Kihara} for the proof).

\begin{theorem}
\label{thm:cubic}
There exists a trigonal curve $C$, defined over $\Q$, such that
$$
\dim_{\F_2} \Pic(C)[2] \geq 12.
$$
Moreover, there exists a degree three morphism $\phi:C\to\PP^1$ such that, for all but $O(\sqrt{N})$ integers $t\in\mathbb{Z}$ with $|t|\leq N$, the field of definition of $\phi^{-1}(t)$ is a cubic field $K_t$ with exactly one real place, satisfying
$$
\dim_{\F_2} \Cl(K_t)[2] \geq 11.
$$
\end{theorem}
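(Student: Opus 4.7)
The strategy is to apply Theorem~\ref{thm:Main}(2) with $k=\Q$, $p=2$, and $E/\Q(t)$ the explicit elliptic curve constructed by Kihara, which is known to have Mordell--Weil rank at least $14$ over $\Q(t)$. The argument breaks into three parts.

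\emph{Step 1 (Verifying the hypotheses of Theorem~\ref{thm:Main}).} Working from Kihara's Weierstrass equation, first check that $E$ is non-isotrivial (its $j$-invariant is a non-constant function of $t$) and that $E[2]$ is irreducible over $\overline{\Q}(t)$, which reduces to showing that the $2$-division polynomial (a cubic in $x$ over $\Q(t)$) has no root in $\overline{\Q}(t)$. Second, run Tate's algorithm on the equation to list every $v\in\PP^1_\Q$ of bad reduction, together with its Kodaira type and Tamagawa number $c_v$. Use this table to confirm that $E$ does not have universal bad reduction at any prime of $\Q$; by the second remark after Theorem~\ref{thm:Main}, it suffices to exhibit one fiber of type $\mathrm{II}$, $\mathrm{II}^*$, $\mathrm{IV}$, or $\mathrm{IV}^*$. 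All of this is a finite symbolic computation.

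\emph{Step 2 (The numerical inequality and the trigonal curve).} With the rank and reduction data in hand, verify
\[
\rk_\Z E(\Q(t)) \;-\; \#\{v\in \PP^1_\Q : 2\mid c_v\} \;-\; \#\{v : \text{reduction type is } \mathrm{I}^*_{2n}\} \;\geq\; 12.
\]
Since $\Cl(\Q)=0$ and $\rk_\Z \mathcal{O}_\Q^\times = 0$, Theorem~\ref{thm:Main}(2) then yields infinitely many cubic fields $L/\Q$ with $\dim_{\F_2}\Cl(L)[2] \geq 12 - \rk_\Z \mathcal{O}_L^\times$. The construction underlying Theorem~\ref{thm:Main} (through Theorem~\ref{thm_kummer}) produces these cubic fields as residue fields at $\Q$-rational points of a trigonal curve $C/\Q$ equipped with a degree $3$ morphism $\phi:C\to\PP^1_\Q$, and shows at the same time $\dim_{\F_2}\Pic(C)[2]\geq 12$; this settles the first assertion of the theorem.

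\emph{Step 3 (Signature and counting of integer specializations).} To extract $\dim_{\F_2}\Cl(K_t)[2]\geq 11$ one needs $\rk_\Z \mathcal{O}_{K_t}^\times \leq 1$, equivalently that $K_t$ has exactly one real place. This is determined by the topology of the real map $\phi:C(\mathbb{R})\to\PP^1(\mathbb{R})$: for $t$ outside a bounded interval governed by the real branch locus of $\phi$, the cardinality of $\phi^{-1}(t)\cap C(\mathbb{R})$ is constant, and one reads off from the Weierstrass data that this constant equals $1$, so $K_t$ has signature $(1,1)$ for all large $|t|$. It remains to count how many integers $t\in[-N,N]$ give a reducible fiber (so $K_t$ is not a cubic field): the locus of such $t$ is a thin set in $\PP^1(\Q)$ associated to the Galois closure of $\phi$, and quantitative Hilbert irreducibility bounds its size by $O(\sqrt{N})$.

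The main obstacle is the detailed combinatorial accounting in Step~2: for Kihara's explicit curve one must compute the Kodaira type at every singular fiber and verify the precise inequality $\geq 12$ against the contribution of $2$-divisible Tamagawa numbers and $\mathrm{I}^*_{2n}$ fibers. Steps~1 and 3 are mechanical and standard once this is done.
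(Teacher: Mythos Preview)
Your overall strategy matches the paper's, but Step~2 has a genuine numerical gap. Kihara's rank-$14$ curve over $\Q(t)$ has singular fibers of types $\mathrm{I}_2$, $\mathrm{I}_4$ (twice), and $\mathrm{I}_6$; for any $\mathrm{I}_{2k}$ fiber the Tamagawa number is even regardless of whether the reduction is split or non-split, so $\#\{v:2\mid c_v\}\geq 4$ and the inequality you propose to verify yields at best $14-4=10$, not $\geq 12$. After subtracting the unit rank you would land at $9$, not $11$. The paper's fix is precisely to change curves: it returns to Kihara's underlying $3$-parameter family $E'/\Q(p,q,u)$ of rank $\geq 12$ and takes the different specialization $p=t$, $q=t+6$, $u=t+1$. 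For this $E/\Q(t)$ one checks that the rank is still $\geq 12$ (by specializing the twelve independent points at $t=1$ and computing the height pairing), that $\Delta(t)$ is irreducible in $\Q[t]$ of degree $96$ so there is a \emph{single} bad fiber over $\PP^1_\Q$, of type $\mathrm{I}_1$, and that there is no prime of universal bad reduction. Both correction terms then vanish and one obtains $\dim_{\F_2}\Pic(C)[2]\geq 12$ and $\dim_{\F_2}\Cl(K_t)[2]\geq 12-1=11$ directly. Your Step~3 is essentially what the paper does.

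A smaller point: in Step~1 you misread the remark. It does not assert that a fiber of type $\mathrm{II}$, $\mathrm{II}^*$, $\mathrm{IV}$, or $\mathrm{IV}^*$ implies the absence of universal bad reduction; rather, such a fiber allows one to \emph{replace} the no-UBR hypothesis altogether by invoking Proposition~\ref{prop2}. In any case neither Kihara's rank-$14$ curve nor the paper's chosen specialization has such a fiber (all bad fibers are of type $\mathrm{I}_n$), so this route is unavailable here and the no-UBR condition must be verified directly from the Weierstrass data.
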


\begin{remark}
\label{rmq:2rank}
Nakano \cite{Nakano} proved that there are infinitely many cubic fields $K$ whose ideal class group has $2$-rank at least $6$. Recently, Kulkarni \cite{kulkarni18} has improved on this result, obtaining infinitely many cubic fields $K$ whose ideal class group has $2$-rank at least $8$. To our knowledge, this is the best previously known result on the existence of an infinite family of cubic fields whose class group has large $2$-rank. In fact, our method is closely related to Kulkarni's, in the sense that in both cases one considers trigonal curves which come from the $2$-torsion of an elliptic fibration.
\end{remark}


\subsection{Picard groups of curves over finite fields}

Picard groups of curves over finite fields are considered natural analogues of class groups of number fields. Thus, we include in this section a remark on the construction of curves over finite fields whose Jacobian contains a large torsion subgroup.

In contradistinction to Conjecture~\ref{conj1}, this geometric analogue does not appear to have been the subject of intense investigation. Nevertheless, as in the number field case, one can make the following remark: the rational $p$-torsion subgroup of Jacobians of curves of gonality $p$ can be made as large as possible by considering equations of the form $y^p=f(x)$, where $f$ splits as a product of linear factors. It seems harder to find curves of gonality coprime to $p$ whose Jacobian contains a rational $p$-torsion subgroup which is large (relative to the gonality). We shall give some results towards this goal.

Ulmer has shown \cite{ulmer02} that, given a prime $q$, one can find non-isotrivial elliptic curves over $\F_q(t)$ with arbitrarily large Mordell-Weil rank. More precisely, given an integer $n\geq 1$, he considers the elliptic curve $E_n$ defined by the equation
\begin{equation}
\tag{$E_n$}
y^2+xy=x^3+t^d, \qquad \text{where $d=q^n+1$},
\end{equation}
and he proves that the rank of $E_n$ over $\F_q(t)$ is at least $(q^n-1)/2n$.

Let us assume that $q\geq 5$. Then the trigonal curve $C_{2,n}\to\PP^1$ describing the $2$-torsion of $E_n$ is defined by the equation
\begin{equation}
\tag{$C_{2,n}$}
4x^3 + x^2 - 4t^d = 0.
\end{equation}

We claim that $C_{2,n}$ is geometrically irreducible. Indeed, if the equation above had a root $x$ in $\overline{\F}_q(t)$, then by Gauss's lemma this root would be a polynomial which divides $t^d$, and this leads to a contradiction.

Applying Theorem~\ref{thm_kummer} to the elliptic curves $E_n$, we obtain in \S{}\ref{sec:ellsurf} the following result.

\begin{theorem}
\label{thm:Ulmer}
Let $q\geq 5$ be a prime. Then
\begin{enumerate}
\item[1)] The family of trigonal curves $C_{2,n}$ defined above (over $\F_q$) satisfies
$$
\dim_{\F_2} \Pic(C_{2,n})[2] \geq \frac{q^n-1}{2n} - 3.
$$
\item[2)] Given an integer $n\geq 1$, for all but finitely many primes $p$, there exists a curve $C$ over $\F_q$ which admits a morphism $C\to \PP^1$ of degree $p^2-1$, such that
$$
\dim_{\F_p} \Pic(C)[p] \geq \frac{q^n-1}{2n} - 2.
$$
\end{enumerate}
\end{theorem}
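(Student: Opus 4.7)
The plan is to apply Theorem~\ref{thm_kummer} to Ulmer's elliptic curves $E_n$, using his rank bound $\rk_\Z E_n(\F_q(t))\geq (q^n-1)/(2n)$. The theorem will produce, for appropriate primes $p$, a curve $C\to\PP^1_{\F_q}$ of degree $p^2-1$ whose $\Pic[p]$-rank is at least the Mordell-Weil rank of $E_n$ minus local corrections coming from bad fibres of the N\'eron model $\E_n\to\PP^1_{\F_q}$; so the proof reduces to controlling those corrections.

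First I would check the hypotheses. Standard formulas for the Weierstrass model $y^2+xy=x^3+t^d$ give (up to a unit) the discriminant $\Delta=-t^d(1+432\,t^d)$ and $j$-invariant $j=-1/(t^d(1+432\,t^d))$, which has a pole of order $d$ at $t=0$, so $E_n$ is non-isotrivial. The Galois irreducibility of $E_n[2]$ over $\overline{\F_q}(t)$ is exactly the geometric irreducibility of $C_{2,n}$, already established in the paragraph preceding the theorem; for part~(2), non-isotriviality guarantees that $E_n[p]$ is geometrically irreducible over $\overline{\F_q}(t)$ for all but finitely many $p$.

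Next I would catalogue the bad fibres using Tate's algorithm. The discriminant formula identifies three kinds of bad places: $t=0$, which gives multiplicative reduction of type $\mathrm{I}_d$; each of the $d$ simple roots of $1+432\,t^d$, of type $\mathrm{I}_1$ with $c_v=1$ (so contributing nothing to any correction term); and $t=\infty$, of additive type, with Kodaira symbol determined by $d\bmod 12$ and $c_\infty\leq 4$. For part~(2), as soon as $p\geq 5$ is coprime to $c_\infty$ and to $d$ (which excludes only finitely many primes $p$), no fibre contributes, and the formula of Theorem~\ref{thm_kummer} immediately gives $\dim_{\F_p}\Pic(C)[p]\geq (q^n-1)/(2n)-2$. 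For part~(1), $d=q^n+1$ is even (since $q$ is odd), so $2\mid c_0$; combining this with the contribution of the additive fibre at $\infty$ and the possibility that this fibre is of Kodaira type $\mathrm{I}_{2m}^*$ for some $m\geq 0$, one checks that the total correction is at most $3$, yielding $\dim_{\F_2}\Pic(C_{2,n})[2]\geq (q^n-1)/(2n)-3$.

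The main technical obstacle is the Tate-algorithm analysis at $t=\infty$: one must determine the Kodaira type (which depends on $d\bmod 12$, hence on $q^n+1\bmod 12$) and, in the $p=2$ case, detect whether the fibre is of the specific form $\mathrm{I}_{2m}^*$ that triggers the extra subtraction in Theorem~\ref{thm:Main}(2). Apart from this, the proof is an immediate combination of Ulmer's rank bound with the Picard-rank formula provided by Theorem~\ref{thm_kummer}.
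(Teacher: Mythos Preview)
Your proposal is correct and follows essentially the same route as the paper: apply Theorem~\ref{thm_kummer} to Ulmer's $E_n$, use his rank bound $(q^n-1)/(2n)$, and control the correction terms by listing the bad fibres ($\mathrm{I}_d$ at $t=0$, $\mathrm{I}_1$ at the zeros of $1+432t^d$, and an additive fibre at $\infty$ whose worst case for $p=2$ is $\mathrm{I}_0^*$, giving the $-1-2=-3$). The only cosmetic differences are that the paper pins the type at $\infty$ to the list $\mathrm{I}_1,\mathrm{II},\mathrm{II}^*,\mathrm{IV},\mathrm{IV}^*,\mathrm{I}_0^*$ (so only $\mathrm{I}_0^*$, not a general $\mathrm{I}_{2m}^*$, can occur) and invokes the geometric Shafarevich theorem by name for the irreducibility of $E_n[p]$; also, your reference to ``Theorem~\ref{thm:Main}(2)'' for the extra $\mathrm{I}_{2m}^*$-term should be to Theorem~\ref{thm_kummer}, part~3).
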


\begin{remark}
Using similar techniques, it has been proved in \cite[Section~5]{cesnavicius15} that, if $p$ and $q$ are two distinct primes, there exists a constant $m$ depending only on $p$ such that the size of $\Pic(C)[p]$ is unbounded when $C$ runs through hyperelliptic curves over $\F_{q^m}$.
\end{remark}



\section{Proofs}


\subsection{Elliptic surfaces over $\PP^1_k$}
\label{sec:ellsurf}

In this section, we briefly recall the main result of \cite{gl18a} in the setting of elliptic surfaces over $\PP^1_k$. We refer the reader to \emph{loc. cit.} for further details and comments.

Let $k$ be a perfect field of characteristic not $2$ or $3$.  In the applications we have in mind, $k$ may be a number field, or a finite field, or the algebraic closure of such fields.

Let $E$ be an elliptic curve over $k(t)$, and let $\E\to\PP^1_k$ be its N{\'e}ron model. If $v$ is a closed point of $\PP^1_k$, we let $\E_v$, $\Phi_v$, and the Tamagawa number $c_v$ be as in \S{}\ref{sec:mainresult}.  We denote by $\Phi$ the skyscraper sheaf over $\PP^1_k$ whose fiber at $v$ is $\Phi_v$, and by $\E^{p\Phi}$ the inverse image of $p\Phi$ by the natural map $\E\to\Phi$.

Given a prime number $p\neq \car(k)$, we denote by $\mathcal{E}[p]\to \PP^1_k$ the group scheme of $p$-torsion points of $\E$. Because $p\neq \car(k)$ the multiplication-by-$p$ map on $\mathcal{E}$, which is a smooth group scheme over $\PP^1_k$, is {\'e}tale, and in particular $\mathcal{E}[p]\to \PP^1_k$ is {\'e}tale. On the other hand, $\mathcal{E}[p]\to \PP^1_k$ is quasi-finite, but not finite in general. This is why we consider the smooth compactification of $\E[p]\setminus \{0\}$, that we denote by $C$, endowed with its canonical finite map $C\to \PP^1_k$ of degree $p^2-1$. In fact, one can show that $\E[p]\setminus \{0\}$ coincides with the {\'e}tale locus of $C\to \PP^1_k$; we refer to \cite[Proof of Lemma~2.8]{gl18a} for further details.

The following result is a special case of \cite[Theorem~1.1]{gl18a}. Assuming that $C$ is geometrically integral, it provides an upper bound on the rank of $E$ in terms of the Tamagawa numbers $c_v$ and the $p$-torsion in the Picard group of $C$. Its proof relies on $p$-descent techniques, analogous to the number field case.

\begin{theorem}
\label{thm_kummer}
Let $p\neq \car(k)$ be a prime number, and let $C$ be the smooth compactification of $\E[p]\setminus \{0\}$. Assume that $C$ is geometrically integral, or equivalently that $E[p]$ is irreducible as a Galois module over $\overline{k}(t)$. Then
\begin{enumerate}
\item[1)] There is an injective morphism
\begin{equation}
\label{eq:descent_map}
\E^{p\Phi}(\PP^1_k)/p\E(\PP^1_k) \longrightarrow  \Pic(C)[p].
\end{equation}
\item[2)] If $p\geq 3$, we have
\begin{equation}
\label{inequality1}
\begin{split}
\dim_{\F_p} \Pic(C)[p] &\geq \dim_{\F_p} \E^{p\Phi}(\PP^1_k)/p\E(\PP^1_k) \\
&\geq \rk_{\Z} E(k(t)) - \#\{v\in \PP^1_k, p\mid c_v\},
\end{split}
\end{equation}
where $c_v$ denotes the Tamagawa number of $\E$ at $v$.
\item[3)] If $p=2$, then
\begin{equation}
\label{inequality2}
\begin{split}
\dim_{\F_2} \Pic(C)[2] \geq & \dim_{\F_2} \E^{2\Phi}(\PP^1_k)/2\E(\PP^1_k) \\
\geq & \rk_{\Z} E(k(t)) - \#\{v\in \PP^1_k, 2\mid c_v\} \\
& -\#\{v\in \PP^1_k, \text{the red. type of $\E$ at $v$ is $\mathrm{I}_{2n}^*$ for some $n\geq 0$} \}.
\end{split}
\end{equation}
\end{enumerate}
\end{theorem}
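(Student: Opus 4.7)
The plan is to mimic classical $p$-descent on an elliptic curve, replacing the ground field by $\PP^1_k$ via \'etale cohomology and globalizing Kummer theory by exploiting the Weil self-duality of $\E[p]$ together with the cover $C\to\PP^1_k$ built from $\E[p]\setminus\{0\}$. The key hypothesis $p\neq\car(k)$ makes $[p]:\E\to\E$ \'etale, which is what powers the whole descent.

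First I would set up the descent exact sequence. Since $[p]$ is \'etale on the smooth group scheme $\E$ and induces multiplication by $p$ on the component sheaf $\Phi$, its image as an \'etale sheaf on $\PP^1_k$ is precisely $\E^{p\Phi}$, yielding
\[
0 \to \E[p] \to \E \xrightarrow{[p]} \E^{p\Phi} \to 0.
\]
The long exact \'etale cohomology sequence on $\PP^1_k$ then gives the coboundary injection $\E^{p\Phi}(\PP^1_k)/p\E(\PP^1_k) \hookrightarrow H^1(\PP^1_k,\E[p])$.

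Next I would push $H^1(\PP^1_k,\E[p])$ into $\Pic(C)[p]$ using the Weil pairing and the cover $C$. The pairing identifies $\E[p]$ with its Cartier dual $\underline{\mathrm{Hom}}(\E[p],\mu_p)$. Over its \'etale locus $\E[p]\setminus\{0\}\subset C$, $C$ carries a tautological section of $\pi^*\E[p]$, so cup-producing $\xi\in H^1(\PP^1_k,\E[p])$ with this tautological section and applying the Weil pairing produces a class in $H^1(C,\mu_p)\cong\Pic(C)[p]$, the isomorphism using geometric integrality of $C$. For the injectivity of the composite (\ref{eq:descent_map}), I would argue as follows: if $P\in\E^{p\Phi}(\PP^1_k)$ maps to the trivial class, then unwinding the construction shows $\pi^*P\in p\E(C)$, say $\pi^*P=[p]Q$. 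Applying the trace $\pi_*$ and using $\deg\pi=p^2-1$ gives $(p^2-1)P \in p\E(\PP^1_k)$, and since $\gcd(p,p^2-1)=1$ this forces $P\in p\E(\PP^1_k)$.

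For parts (2) and (3) I would bound the left-hand side below via the exact sequence
\[
0 \to \E^{p\Phi}(\PP^1_k)/p\E(\PP^1_k) \to \E(\PP^1_k)/p\E(\PP^1_k) \to \bigoplus_v \Phi_v(k_v)/p\Phi_v(k_v).
\]
Irreducibility of $E[p]$ over $\overline{k}(t)$ implies $E(k(t))[p]=0$, so the middle group has $\F_p$-dimension equal to $\rk_\Z E(k(t))$. For odd $p$, every Kodaira component group is cyclic or a $2$-group, so each local summand contributes dimension $1$ exactly when $p\mid c_v$ and $0$ otherwise, giving (\ref{inequality1}). For $p=2$, cyclic $\Phi_v$ still contribute at most $1$, while the non-cyclic reduction types $\mathrm{I}_{2n}^*$, where $\Phi_v\cong(\Z/2\Z)^2$, can contribute up to $2$, accounting for the extra correction in (\ref{inequality2}). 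The main obstacle is the construction step: producing the map $H^1(\PP^1_k,\E[p])\to\Pic(C)[p]$ and verifying injectivity on the image of descent, which requires passing from \'etale cohomology on $\PP^1$ to the Picard group of the smooth compactification $C$ of a quasi-finite (but non-finite) \'etale cover, and checking that the Weil-pairing construction interacts correctly with the boundary points $C\setminus(\E[p]\setminus\{0\})$.
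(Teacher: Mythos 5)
The paper does not actually prove Theorem~\ref{thm_kummer}; it cites it as a special case of \cite[Theorem~1.1]{gl18a} and merely records that the injection \eqref{eq:descent_map} factors as $\E^{p\Phi}(\PP^1_k)/p\E(\PP^1_k)\to H^1(C,\mu_p)\to\Pic(C)[p]$, the first arrow coming from Kummer theory on $\E$. Your overall architecture (the \'etale exact sequence $0\to\E[p]\to\E\xrightarrow{[p]}\E^{p\Phi}\to 0$, the Kummer coboundary, the Weil-pairing push-forward along the tautological section, and the component-group bookkeeping for parts 2 and 3) is consistent with that sketch, and your bookkeeping in parts 2) and 3) is essentially right: $\E(\PP^1_k)=E(k(t))$ by the N\'eron mapping property, irreducibility of $E[p]$ forces $E(k(t))[p]=0$, and the Kodaira component groups are cyclic except for $(\Z/2)^2$ in types $\mathrm{I}_{2n}^*$, which is exactly the source of the extra correction when $p=2$. (One small imprecision: the quotient in your exact sequence should be $\Phi(\PP^1_k)/(p\Phi)(\PP^1_k)$ rather than $\bigoplus_v\Phi_v(k_v)/p\Phi_v(k_v)$; since $(p\Phi_v)(k_v)$ can be larger than $p(\Phi_v(k_v))$ this only strengthens the bound, but the sheaf-theoretic statement should be the one used.)

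The real gap is in the injectivity argument for part 1), and it occurs at the very step you flag as ``unwinding the construction.'' You assert that if the image of $P$ in $\Pic(C)[p]$ is trivial then $\pi^*P\in p\E(C)$. This does not follow. The map $\pi^*\E[p]\to\mu_p$ over $C$ defined by Weil pairing with the tautological torsion section $T$ has a nontrivial kernel: since the Weil pairing is alternating, $e_p(T,T)=1$, so $\ker(e_p(-,T))=\langle T\rangle\cong\Z/p$. Consequently the push-forward $H^1(C,\pi^*\E[p])\to H^1(C,\mu_p)$ has kernel containing the (typically large) image of $H^1(C,\Z/p)$, and triviality of the induced $\mu_p$-torsor says nothing about the triviality of the $\pi^*\E[p]$-torsor $[p]^{-1}(\pi^*P)$. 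The restriction--corestriction trick with $\gcd(p,p^2-1)=1$ correctly handles injectivity of $H^1(\PP^1_k,\E[p])\to H^1(C,\pi^*\E[p])$, but it does not see the loss of information in the subsequent map to $H^1(C,\mu_p)$. A correct proof of injectivity has to make essential use of the irreducibility of $E[p]$ over $\overline{k}(t)$ beyond the point where you invoke it; this requires a separate argument that your proposal does not supply. A secondary issue: your claim that $H^1(C,\mu_p)\cong\Pic(C)[p]$ from geometric integrality is false over a number field. Geometric integrality gives $H^0(C,\mathbb{G}_m)=k^\times$, so the Kummer sequence yields $0\to k^\times/(k^\times)^p\to H^1(C,\mu_p)\to\Pic(C)[p]\to 0$, a surjection with nontrivial kernel; one must also argue that the image of the Kummer map meets this kernel trivially, another point your sketch leaves unaddressed.
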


In fact, the injective morphism \eqref{eq:descent_map} is obtained by composing maps
\begin{equation}
\label{eq:H1map}
\begin{CD}
\E^{p\Phi}(\PP^1_k)/p\E(\PP^1_k) @>>> H^1(C,\mu_p) @>>> \Pic(C)[p]
\end{CD}
\end{equation}
the first being obtained by Kummer theory on $\E$, and the second being the natural one. The cohomology group in the middle is computed with respect to the {\'e}tale topology, or equivalently the fppf topology since $\mu_p$ is {\'e}tale over $C$ under the assumptions of Theorem~\ref{thm_kummer}.

A comment on the terminology: when we say that $E$ has a fiber of type $\mathrm{I}_{2n}^*$ at $v$, we mean it over $k_v$, and not just over $\overline{k}$. More precisely, this means that the Kodaira type of $\E_v$ over $\overline{k}$ is $\mathrm{I}_{2n}^*$, and that the four components of $\E_v$ are rational over $k_v$, in other terms $\Phi_v(k_v)\simeq (\Z/2)^2$. In general, the reduction type at $v$ can be described by the data of the reduction type over $\overline{k}$ together with the action of the absolute Galois group of $k_v$ on $\Phi_v$. See Liu's book \cite[\S{}10.2]{liu}.

\begin{remark}
\label{rmq:Aaron}
The curve $C$ in the statement of Theorem~\ref{thm_kummer} corresponds to the field over which $E$ acquires one rational $p$-torsion point. One can also introduce a curve $C'$ corresponding to the field over which $E$ has a rational cyclic subgroup of order $p$; then we have canonical maps
$$
C \longrightarrow C' \longrightarrow \PP^1_k
$$
of degree $p-1$ and $p+1$, respectively. Given a specific example of a curve $E$, one can find equations for $C$ and $C'$, and compute the kernel of the norm map $\Pic(C)[p]\to \Pic(C')[p]$. If this kernel is small enough then, using techniques of \S{}\ref{sec:avoidingUBR}, one could use the map $C'\to \PP^1_k$ of degree $p+1$ in order to build extensions $L'/k$ of degree $p+1$ with $\dim_{\F_p} \Cl(L')[p]$ large.
\end{remark}

\begin{proof}[Proof of Theorem~\ref{thm:Ulmer}]
Ulmer has checked that $E_n$ has reduction $I_1$ at places dividing $(1-2^43^2t^d)$, and has split multiplicative reduction $I_d$ at $t=0$. The last place of bad reduction is $t=\infty$, where the possible reduction types are $\mathrm{I}_1$, $\mathrm{II}$, $\mathrm{II}^*$, $\mathrm{IV}$, $\mathrm{IV}^*$ or $\mathrm{I}_0^*$ depending on $q^n+1\pmod{6}$. Then 1) is a consequence of \eqref{inequality2} in Theorem~\ref{thm_kummer}, the error term $-3$ being obtained as $-1-2$, where $-1$ corresponds to the fiber at $t=0$ whose Tamagawa number is divisible by two, and $-2$ corresponds to the worst case when the reduction type at infinity is $\mathrm{I}_0^*$. The second statement follows similarly from \eqref{inequality1} in Theorem~\ref{thm_kummer}, combined with the following observation: the elliptic curve $E_n$ is not isotrivial, hence according to the geometric version of Shafarevich's theorem, $E_n[p]$ is $\overline{k}$-irreducible for all but finitely many primes $p$.
\end{proof}


\subsection{Fibers of type $\mathrm{II}$, $\mathrm{II}^*$, $\mathrm{IV}$, $\mathrm{IV}^*$}

Let $k$ be a number field, and let $E$ be an elliptic curve over $k(t)$ defined by a Weierstrass equation
$$
y^2 = x^3 + a(t) x + b(t).
$$

Let us assume that $E(\overline{k}(t))[2]=\{0\}$, which is equivalent to saying that $E[2]$ is irreducible as a Galois module over $\overline{k}(t)$. Then the smooth compactification of $\E[2]\setminus\{0\}$ is none other than the smooth, projective,  geometrically integral curve $C$ defined by the affine equation
$$
x^3 + a(t) x + b(t) = 0,
$$
and the canonical map $C\to \PP^1_k$ is just the $t$-coordinate map, which has degree $3$.

\begin{lemma}
\label{total_ramif}
Assume $\mathcal{E}\to \PP^1_k$ has a fiber of type $\mathrm{II}$, $\mathrm{II}^*$, $\mathrm{IV}$ or $\mathrm{IV}^*$ at some $v\in \PP^1_k$. Then the $t$-coordinate map $C\to \PP^1_k$ is totally ramified above $v$.
\end{lemma}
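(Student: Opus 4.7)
The plan is to reduce the claim to a Newton polygon computation at $v$, after passing to a minimal Weierstrass equation. Since both $C$ and its map to $\PP^1_k$ are intrinsically attached to $E$, an admissible change of coordinates $x = u^2 X$, $y = u^3 Y$ with $u \in k(t)^\times$ replaces the affine equation $x^3 + a(t) x + b(t) = 0$ by $X^3 + (u^{-4} a)\, X + (u^{-6} b) = 0$, yielding an isomorphic affine model of $C$ over $\PP^1_k$. Hence I may assume that the given Weierstrass equation is minimal at $v$.

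Next I would apply Tate's algorithm to read off the $v$-adic valuations of $a$ and $b$ in each of the four cases: for type $\mathrm{II}$, $v(a) \geq 1$ and $v(b) = 1$; for $\mathrm{IV}$, $v(a) \geq 2$ and $v(b) = 2$; for $\mathrm{IV}^*$, $v(a) \geq 3$ and $v(b) = 4$; for $\mathrm{II}^*$, $v(a) \geq 4$ and $v(b) = 5$. In each case one checks that $v(a) \geq \tfrac{2}{3} v(b)$, so the point $(1, v(a))$ lies on or above the segment joining $(0, v(b))$ and $(3, 0)$. The Newton polygon of $f(x) = x^3 + a(t) x + b(t)$ at $v$ therefore consists of this single segment, with slope $-v(b)/3$ whose numerator $v(b) \in \{1, 2, 4, 5\}$ is coprime to $3$.

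Since $\gcd(v(b), 3) = 1$, the polynomial $f$ is irreducible over the completion $\widehat{k(t)_v}$, and adjoining a root enlarges the value group by a factor of $3$. The completed local ring of $C$ above $v$ is accordingly a totally ramified cubic extension of $\widehat{\OO_{\PP^1_k, v}}$, with $e = 3$ and $f = 1$; this is exactly the statement that $C \to \PP^1_k$ has a unique point above $v$ with ramification index $3$. I do not foresee any substantive obstacle; the argument reduces to the numerical observation that, in these four Kodaira types, $v(b)$ is coprime to $3$ and small enough relative to $v(a)$ that the Newton polygon consists of a single segment.
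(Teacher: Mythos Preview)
Your argument is correct. The reduction to a minimal model is legitimate because $C$ and its $t$-map depend only on the function field extension $k(t)\subset k(C)$, so the birational substitution $x=u^2X$ does not affect them; the valuations you read off from Tate's algorithm (in residue characteristic $\neq 2,3$) are accurate, as one checks directly from $v(c_4)=v(a)$ and $v(\Delta)=v(4a^3+27b^2)$; and the single-segment Newton polygon with slope $-v(b)/3$, $\gcd(v(b),3)=1$, gives irreducibility of $f$ over $\widehat{k(t)_v}$ together with $e=3$, hence a unique totally ramified point of $C$ above $v$.

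The paper takes a different, more structural route. It invokes the identification, established in the proof of \cite[Lemma~2.8]{gl18a}, of $\E[2]\setminus\{0\}$ with the \'etale locus of $C\to\PP^1_k$. For a fibre of type $\mathrm{II}$, $\mathrm{II}^*$, $\mathrm{IV}$ or $\mathrm{IV}^*$, the identity component of $\E_v$ is $\mathbb{G}_a$ (which has no $2$-torsion since $\car(k)\neq 2$) and the component group $\Phi_v$ has order $1$ or $3$; hence $\E_v[2]=\{0\}$ and the \'etale locus has empty fibre above $v$. In a degree-$3$ cover this forces a single totally ramified point. Your computation is self-contained and elementary, but tailored to $p=2$ and the short Weierstrass form; the paper's argument is conceptual and extends verbatim to any prime $p$ and any Kodaira type for which $\E_v[p]=\{0\}$.
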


\begin{proof}
This follows from the proof of Lemma~2.8 in \cite{gl18a}.
\end{proof}

\begin{proposition}
\label{prop2}
Let $E$ be an elliptic curve over $k(t)$ such that $E(\overline{k}(t))[2]=\{0\}$. Assume in addition that its N{\'e}ron model has a fiber of type $\mathrm{II}$, $\mathrm{II}^*$, $\mathrm{IV}$ or $\mathrm{IV}^*$ at some $v\in \PP^1(k)$. Then there exist infinitely many cubic field extensions $L/k$ such that
\begin{equation}
\begin{split}
\dim_{\F_2} \Cl(L)[2] - \dim_{\F_2} \Cl(k)[2]  &\geq \rk_{\Z} E(\Q(t)) - \#\{v\in \PP^1_k, 2|c_v\} \\
& -\#\{v\in \PP^1_k, \text{the red. type of $\E$ at $v$ is $\mathrm{I}_{2n}^*$ for some $n\geq 0$} \}.
\end{split}
\end{equation}
\end{proposition}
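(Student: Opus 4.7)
The plan is to adapt the strategy used for the proof of Theorem~\ref{thm:Main}~(2) (which will be developed in \S{}\ref{sec:avoidingUBR}), exploiting the $k$-rational point on $C$ furnished by Lemma~\ref{total_ramif} in order to suppress the unit-rank contribution present in \eqref{classbound2}. By Lemma~\ref{total_ramif}, total ramification at $v\in\PP^1(k)$ yields a unique point $P\in C$ above $v$; since the ramification index $3$ equals $\deg(\phi)$, the residue degree of $P$ is $1$ and $P$ is $k$-rational. Theorem~\ref{thm_kummer}~(3) provides a subgroup $G\subset\Pic(C)[2]$ whose $\F_2$-dimension is at least the claimed lower bound, arising as the image of the descent map~\eqref{eq:descent_map}; since this map factors through $H^1(C,\mu_2)$, all classes in $G$ have degree zero, i.e.\ $G\subset\Pic^0(C)[2]$.

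Set $V:=C\setminus\{P\}$, an affine open of $C$. The exact sequence
\[
0\to\Z\cdot[P]\to\Pic(C)\to\Pic(V)\to 0,
\]
combined with $\deg(P)=1$ and $G\subset\Pic^0(C)$, shows that $G$ injects into $\Pic(V)[2]$. After clearing denominators in a Weierstrass equation for $E$ and inverting a finite set $S$ of primes of $\OO_k$, fix an integral model $\mathcal{V}\to\Spec(\OO_k[S^{-1}][t])$ of $V\to\mathbb{A}^1_k$. For $t_0\in\OO_k[S^{-1}]$ avoiding a thin set furnished by Hilbert irreducibility, $\phi^{-1}(t_0)=\Spec(L_{t_0})$ for a genuine cubic field $L_{t_0}/k$, and specialization at $t_0$ induces a natural homomorphism from $G$ to $\Cl(L_{t_0})[2]/\text{image}(\Cl(k)[2])$. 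A further Hilbert irreducibility argument, together with local control at the primes in $S$, yields infinitely many $t_0$ for which this map is injective on $G$ and the fields $L_{t_0}$ are pairwise non-isomorphic.

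The essential improvement over the general bound \eqref{classbound2} is the disappearance of the term $-\rk_\Z\OO_L^\times+\rk_\Z\OO_k^\times$. In the setting of Theorem~\ref{thm:Main}, one is forced to work with an affine open whose complement consists of several Galois-conjugate points with residue fields strictly larger than $k$, and a Dirichlet-type exact sequence injects a group of rank $\rk_\Z\OO_L^\times-\rk_\Z\OO_k^\times$ into the kernel of specialization. Here, with $V=C\setminus\{P\}$ and $P$ a $k$-rational point, the boundary is a single rational divisor and the corresponding unit quotient collapses to zero. The main technical obstacle is the uniform kernel control in the second paragraph: one must verify that specialization remains injective on $G$ for a positive-density set of $t_0$, and that infinitely many pairwise non-isomorphic $L_{t_0}$ arise. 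Both points are handled as in \S{}\ref{sec:avoidingUBR}, where the argument simplifies considerably thanks to the availability of the rational point $P$.
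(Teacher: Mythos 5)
The paper's proof of Proposition~\ref{prop2} is a three-step citation: Lemma~\ref{total_ramif} gives a $k$-rational totally ramified point, then \cite[Theorem~1.4]{bg16} is invoked as a black box to produce infinitely many cubic fields $L/k$ with $\dim_{\F_2}\Cl(L)[2]-\dim_{\F_2}\Cl(k)[2]\geq\dim_{\F_2}\Pic(C)[2]$, and finally Theorem~\ref{thm_kummer}(3) bounds $\dim_{\F_2}\Pic(C)[2]$ from below. Your proposal does not use that external theorem; instead you attempt a self-contained re-derivation by adapting the machinery of \S{}\ref{sec:avoidingUBR}. The core idea you isolate --- that classes in the image of the descent map have degree zero, hence inject into $\Pic(V)$ for $V=C\setminus\{P\}$, and that the single rational boundary point lets one bypass the unit-rank contribution --- is indeed the right intuition behind the cited theorem, so in spirit the approaches are aligned; but the paper proves nothing here, whereas you are proving everything.

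The problem is that the adaptation you outline has a genuine gap. Theorem~\ref{thmclassgp} and the whole of \S{}\ref{sec:avoidingUBR} rely crucially on the hypothesis that $E$ has no prime of universal bad reduction: that hypothesis is exactly what supplies, for each prime $\mathfrak{p}$ in the Chevalley--Weil exceptional set $T$, a parameter $t_\mathfrak{p}$ at which the Néron model is an abelian scheme, so that specialized torsors become integral at $\mathfrak{p}$. Proposition~\ref{prop2} does not assume no-UBR, and possessing a rational totally ramified point does not resupply it. When you write that ``Both points are handled as in \S{}\ref{sec:avoidingUBR}, where the argument simplifies considerably thanks to the availability of the rational point $P$,'' you are implicitly importing a hypothesis that is not in force; the integrality of the specialized torsors at the primes of $S$ (your notation) or $T$ is precisely the step that needs a different mechanism --- which is what \cite[Theorem~1.4]{bg16} provides internally. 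Separately, your description of where the unit term in \eqref{classbound2} comes from is slightly off: in the proof of Theorem~\ref{thm:Main} it arises from the Kummer exact sequence on $\Spec(\mathcal{O}_L)$, not from a Dirichlet-type localization sequence on an affine open of $C$. The ``$G\subset\Pic^0(C)$ injects into $\Pic(V)$'' observation is correct and relevant, but you would still need to trace through why the specialized image avoids the unit subquotient $\mathcal{O}_L^\times/(\mathcal{O}_L^\times)^2$ of $H^1(\mathcal{O}_L,\mu_2)$, which your sketch asserts rather than proves.
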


\begin{proof}
Let $C$ be the trigonal curve defining $2$-torsion points as above. According to Lemma~\ref{total_ramif}, the natural trigonal map $C\to \PP^1_k$ is totally ramified over some rational point. According to \cite[Theorem~1.4]{bg16}, there exist infinitely many number fields $L/k$ of degree $3$ such that
$$
\dim_{\F_2} \Cl(L)[2] - \dim_{\F_2} \Cl(k)[2] \geq \dim_{\F_2} \Pic(C)[2].
$$
The result then follows from the last statement of Theorem~\ref{thm_kummer}.
\end{proof}


\subsection{Avoiding universal bad reduction}
\label{sec:avoidingUBR}

We now examine proving inequalities as in Proposition~\ref{prop2} in the absence of fibers of type $\mathrm{II}$, $\mathrm{II}^*$, $\mathrm{IV}$ or $\mathrm{IV}^*$, and working with any prime number $p$.

For that purpose, we first introduce the notion of \emph{universal bad reduction} for an elliptic family defined over a number field.

Let $k$ be a number field, and let $E$ be an elliptic curve over $k(t)$ defined by a Weierstrass equation
$$
y^2=x^3+a(t)x+b(t)
$$
with $a(t),b(t)\in \OO_k[t]$.  Let $\E\to \PP^1_k$ be the N{\'e}ron model of $E$, and let $\Sigma\subset\PP^1_k$ be the set of places of bad reduction of $\E$.

\begin{definition}
\label{dfn:UBR}
Let $\mathfrak{p}$ be a prime ideal in $\OO_k$.  We will say that $\mathfrak{p}$ is a prime of \emph{universal bad reduction} for $E$ if $\mathfrak{p}$ is a prime of bad reduction for every elliptic fiber $\E_t$, $t\in \PP^1(k)\setminus \Sigma$.  
\end{definition}

\begin{remark}
Let $\Delta(t):=-16(4a(t)^3+27b(t)^2)$ be the discriminant of the Weierstrass equation defining $E$.  If $\mathfrak{p}$ is a prime of universal bad reduction for $E$, then $\Delta(t) \pmod{\mathfrak{p}}$ is divisible by $t^q-t$, where $q=N(\mathfrak{p})$ is the (absolute) norm of $\mathfrak{p}$.  In particular, if $\Delta(t) \pmod{\mathfrak{p}}$ is not identically zero, then we must have $N(\mathfrak{p})\leq \deg \Delta$.
\end{remark}

Before we state our main result, we set some notation: if $R$ is a commutative ring, we denote by $H^1(R,\mu_p)$ the fppf cohomology group $H^1(\Spec(R),\mu_p)$. If $K$ is a number field or a local field, then we identify $H^1(\mathcal{O}_K,\mu_p)$ with a subgroup of $H^1(K,\mu_p)$ via the natural restriction map.

\begin{theorem}
\label{thmclassgp}
Suppose that $E$ does not have universal bad reduction at any prime of $k$. Assume that $E[p]$ is an irreducible Galois module over $\overline{k}(t)$. Let $C$ be the smooth compactification of $\E[p]\setminus \{0\}$, and let $H$ denote the image of the injective map from \eqref{eq:H1map}
$$
\E^{p\Phi}(\PP^1_k)/p\E(\PP^1_k) \hookrightarrow H^1(C,\mu_p).
$$
Then there exists a map $\psi:C \to \PP^1_k$ of degree $p^2-1$ such that, for all but $O(\sqrt{N})$ integers $t\in\mathbb{Z}$ with $|t|\leq N$, we have that:
\begin{enumerate}
\item[1)] $P_{t,\psi}:=\psi^{-1}(t)$ is the spectrum of a field $k(P_{t,\psi})$, with $[k(P_{t,\psi}):k]=p^2-1$;
\item[2)] the image of $H$ under the specialization map $P_{t,\psi}^*:H^1(C,\mu_p)\to H^1(k(P_{t,\psi}),\mu_p)$ lands into the subgroup $H^1(\mathcal{O}_{k(P_{t,\psi})},\mu_p)$;
\item[3)] the specialization map $P_{t,\psi}^*$ above is injective on $H$.
\end{enumerate}
\end{theorem}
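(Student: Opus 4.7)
The plan is to construct $\psi$ as a composition $\psi = \sigma\circ\pi$, where $\pi\colon C\to\PP^1_k$ is the natural degree-$(p^2-1)$ projection already used in Theorem~\ref{thm_kummer} and $\sigma\in\mathrm{PGL}_2(k)$ is an automorphism of $\PP^1_k$ to be chosen adelically. I would first fix a regular proper integral model $\mathcal{C}\to\Spec(\OO_k)$ of $C$, so that $\pi$ extends to a morphism $\tilde\pi\colon\mathcal{C}\to\PP^1_{\OO_k}$. Each class in the finite-dimensional $\F_p$-vector space $H\subset H^1(C,\mu_p)$ corresponds to a $\mu_p$-torsor on $C$ that, after spreading out, extends to a $\mu_p$-torsor on some open subscheme $\mathcal{U}=\mathcal{C}\setminus Z$ with $Z\subset\mathcal{C}$ a proper closed subscheme. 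Let $\Sigma\subset\Spec(\OO_k)$ collect those primes $\mathfrak{p}$ at which either $\mathcal{C}$ is singular, or the reduction of $\tilde\pi(Z)$ fills the whole fiber $\PP^1_{\kappa(\mathfrak{p})}$; the no-universal-bad-reduction hypothesis ensures that $\Sigma$ is finite.

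For each $\mathfrak{p}\in\Sigma$, the absence of universal bad reduction supplies a point $\bar t_\mathfrak{p}\in\PP^1(\kappa(\mathfrak{p}))$ lying outside the reduction of $\tilde\pi(Z)$, and Hensel's lemma produces a $\mathfrak{p}$-adic open neighborhood $U_\mathfrak{p}\subset\PP^1(k_\mathfrak{p})$ of $\bar t_\mathfrak{p}$ such that for every $s\in U_\mathfrak{p}$ the scheme $\tilde\pi^{-1}(s)$ is étale of degree $p^2-1$ over $\OO_{k,\mathfrak{p}}$ and contained in $\mathcal{U}$. By weak approximation in $\mathrm{PGL}_2(k)$, I would then choose $\sigma$ of affine form $\sigma^{-1}(x)=ax+b$, with $a\in\OO_k$ $\mathfrak{p}$-adically very small and $b\in\OO_k$ $\mathfrak{p}$-adically close to $\bar t_\mathfrak{p}$ for every $\mathfrak{p}\in\Sigma$; this guarantees $\sigma^{-1}(t)\in U_\mathfrak{p}$ for every $t\in\Z$ and every $\mathfrak{p}\in\Sigma$. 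Set $\psi=\sigma\circ\pi$.

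Assertion (1) then follows from the quantitative form of Hilbert's irreducibility theorem applied to the geometrically irreducible cover $\psi\colon C\to\PP^1_k$, which yields $[k(P_{t,\psi}):k]=p^2-1$ for all but $O(\sqrt N)$ integers $|t|\leq N$. For such $t$, the scheme-theoretic closure of $P_{t,\psi}$ in $\mathcal{C}$ is an $\OO_L$-valued point of $\mathcal{U}$: away from $\Sigma$ because $\tilde\pi$ is an étale cover of $\PP^1_{\OO_k}\setminus\tilde\pi(Z)$ of the expected degree, and above primes of $\Sigma$ by the construction of $\sigma$. Pulling back the $\mu_p$-torsors from $\mathcal{U}$ along this $\OO_L$-section produces $\mu_p$-torsors on $\Spec(\OO_L)$, establishing (2). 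For (3), each nonzero class in $H$ gives a geometrically integral étale cyclic degree-$p$ cover of a dense open of $C$; one further application of Hilbert irreducibility to the finitely many nonzero classes in $H$ shows that $P_{t,\psi}^{*}$ is injective on $H$ outside an additional thin set, again with $O(\sqrt N)$ exceptions in $|t|\leq N$.

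The main obstacle is verifying that a single global choice of $\sigma$ simultaneously arranges, for every integer $t$ and every $\mathfrak{p}\in\Sigma$, that $\sigma^{-1}(t)$ reduces modulo $\mathfrak{p}$ into the good Hensel ball $U_\mathfrak{p}$; the no-universal-bad-reduction assumption is exactly what makes this possible at every bad prime (and not merely at most of them), and is therefore the hypothesis forcing the $\mu_p$-torsors to extend to the full ring of integers $\OO_L$ rather than to some localization $\OO_L[1/M]$.
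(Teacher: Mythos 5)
There is a genuine gap at the crucial step of your argument. You define $\Sigma$ to contain the primes $\mathfrak{p}$ at which $\mathcal{C}$ is singular \emph{or} the reduction of $\tilde\pi(Z)$ fills the whole fiber $\PP^1_{\kappa(\mathfrak{p})}$, and you then claim that absence of universal bad reduction at $\mathfrak{p}$ supplies a point $\bar t_\mathfrak{p}\in\PP^1(\kappa(\mathfrak{p}))$ lying outside the reduction of $\tilde\pi(Z)$. If $\tilde\pi(Z)$ fills the fiber (which is precisely why $\mathfrak{p}$ landed in $\Sigma$), no such point exists. Moreover, the no-universal-bad-reduction hypothesis is a statement about good reduction of the specialized elliptic curves $E_{t}$; it says nothing about the geometry of the $\mu_p$-torsors $X_i$ on a model of $C$, and there is no reason for those torsors to spread out over a fiber of $\mathcal{C}$ just because $E_{t_\mathfrak{p}}$ has good reduction at $\mathfrak{p}$. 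The torsors may fail to extend over the entire fiber of $\mathcal{C}$ at $\mathfrak{p}$, and yet their specializations at a good-reduction $t_\mathfrak{p}$ can still be integral: these two phenomena are not the same.

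The paper's argument closes exactly this gap by working arithmetically rather than geometrically. At a prime $\mathfrak{p}$ of bad spreading-out it chooses $t_\mathfrak{p}\in\PP^1(k)$ with $E_{t_\mathfrak{p}}$ having good reduction at $\mathfrak{p}$. Then the N\'eron model $\NN_{t_\mathfrak{p}}$ over $\OO_{k,\mathfrak{p}}$ is an abelian scheme, so $[p]$ is an fppf epimorphism, and the commutative diagram built from fppf Kummer theory and the Weil pairing shows that the specialized torsors $P_{t_\mathfrak{p}}^*X_i$ land in $\prod_{\mathfrak{q}|\mathfrak{p}} H^1(\OO_{k(P_{t_\mathfrak{p}}),\mathfrak{q}},\mu_p)$, i.e.\ they become integral even though $X_i$ itself has no integral model over the fiber of $\mathcal{C}$ at $\mathfrak{p}$. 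A Krasner-type argument then propagates this to all $t$ that are $\mathfrak{p}$-adically close to $t_\mathfrak{p}$. Your affine renormalization of the $t$-coordinate map, the use of Chevalley--Weil outside $\Sigma$, and the appeal to quantitative Hilbert irreducibility for (1) and (3) all match the paper's strategy; what is missing is the abelian-scheme/Kummer-theory mechanism that makes the specializations integral at the finitely many bad primes, which cannot be replaced by the spreading-out argument you propose.
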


\begin{proof}
Let $X_1,\dots,X_r$ be independent $\mu_p$-torsors over $C$ generating $H$. Then by the Chevalley-Weil theorem, there exists a finite set $T$ of places of $k$ such that the $X_i$ can be extended to $\mu_p$-torsors in $H^1(\mathcal{C}_T,\mu_p)$, where $\mathcal{C}_T$ denotes a smooth projective model of $C$ over $\Spec(\mathcal{O}_{k,T})$. In particular, $T$ contains the set of bad places of $C$.

We denote by $\phi:C\to\PP^1_k$ the natural map $(x,t)\mapsto t$, of degree $p^2-1$. For each $t\in \PP^1(k)$, we let $P_t:=\psi^{-1}(t)$. It follows from the construction of $T$ and the projectivity of $\mathcal{C}_T$ that, for each $t\in \PP^1(k)$, the image of $H$ under the specialization map $P_t^*:H^1(C,\mu_p)\to H^1(k(P_t),\mu_p)$ lands into the subgroup $H^1(\mathcal{O}_{k(P_t),T},\mu_p)$.

Let us now pick $\mathfrak{p}\in T$. By assumption, $E$ does not have universal bad reduction at $\mathfrak{p}$, and hence there exists $t_\mathfrak{p}\in\PP^1(k)$ such that $E_{t_\mathfrak{p}}$ has good reduction at $\mathfrak{p}$.

Let $\NN_{t_\mathfrak{p}}$ be the N{\'e}ron model of $E_{t_\mathfrak{p}}$ over $\Spec(\mathcal{O}_{k,\mathfrak{p}})$. Then $\NN_{t_\mathfrak{p}}$ is an abelian scheme and it follows that the map $[p]:\NN_{t_\mathfrak{p}}\to \NN_{t_\mathfrak{p}}$ is an epimorphism for the fppf topology, regardless of the residue characteristic of $\mathfrak{p}$.

Let $\mathfrak{q}|\mathfrak{p}$ be a place of $k(P_{t_\mathfrak{p}})$ above $\mathfrak{p}$, and let $P_\mathfrak{q}\in C(k(P_{t_\mathfrak{p}})_\mathfrak{q})$ be the corresponding localization of $P_{t_\mathfrak{p}}$. More geometrically, $P_{t_\mathfrak{p}}\otimes k_\mathfrak{p}$ is the disjoint union of the $P_\mathfrak{q}$, hence $(E_{t_\mathfrak{p}}[p]\setminus\{0\})\otimes k_\mathfrak{p}$ is the union of the $P_\mathfrak{q}$. The Weil pairing induces a map
$$
w:\NN_{t_\mathfrak{p}}[p]\longrightarrow \prod_{\mathfrak{q}|\mathfrak{p}} \mathrm{Res}_{\mathcal{O}_{k(P_t),\mathfrak{q}}/\mathcal{O}_{k,\mathfrak{p}}}\mu_p.
$$
We have a commutative diagram
$$
\begin{CD}
E_{t_\mathfrak{p}}(k_\mathfrak{p})/p @= \NN_{t_\mathfrak{p}}(\mathcal{O}_{k,\mathfrak{p}})/p @>>> \prod_{\mathfrak{q}|\mathfrak{p}} H^1(\mathcal{O}_{k(P_t),\mathfrak{q}},\mu_p) \\
@At_\mathfrak{p}^*AA @. @VVV \\
\E^{p\Phi}(\PP^1_k)/p\E(\PP^1_k) @>>> H^1(C,\mu_p) @>(P_{t_\mathfrak{p}}\otimes k_\mathfrak{p})^*>> \prod_{\mathfrak{q}|\mathfrak{p}} H^1(k(P_t)_\mathfrak{q},\mu_p) \\
\end{CD}
$$
in which the upper right map comes from fppf Kummer theory on the abelian scheme $\NN_{t_\mathfrak{p}}$, combined with the Weil pairing map. The vertical left map is obtained by specializing the elliptic family at $t_\mathfrak{p}\in \PP^1(k_\mathfrak{p})$.

This proves that, under the map $P_{t_\mathfrak{p}}^*$, all elements of $H$ (and, in fact, all $\mu_p$-torsors built from Kummer theory on $E$) specialize to torsors which are integral at all places of $k(P_t)$ above $\mathfrak{p}$. By a variant of Krasner's lemma, similar to \cite[Lemma~2.3]{bg16}, the same holds for any $t\in\PP^1(k)$ which is $\mathfrak{p}$-adically close enough to $t_\mathfrak{p}$.

Finally, let us recall that an element of $H^1(\mathcal{O}_{k(P_t),T},\mu_p)$ belongs to $H^1(\mathcal{O}_{k(P_t)},\mu_p)$ if and only if it belongs to $H^1(\mathcal{O}_{k(P_t),\mathfrak{q}},\mu_p)$ for each prime $\mathfrak{q}\in T_{k(P_t)}$. Putting everything together, we may conclude that, if $t\in\PP^1(k)$ is $\mathfrak{p}$-adically close enough to $t_\mathfrak{p}$ for every $\mathfrak{p}$ in $T$, then the image of $H$ under $P_t^*$ lands into $H^1(\mathcal{O}_{k(P_t)},\mu_p)$.

Let $T_\Z$ denote the set of prime numbers lying below primes in $T$, and let $\psi:C \to \PP^1_k$ be the map defined by
$$
\psi:=\frac{1}{(\prod_{p\in T_\Z}p)^M}(\phi-t_0),
$$
where $t_0\in k$ is $\mathfrak{p}$-adically close enough to $t_\mathfrak{p}$ for every $\mathfrak{p}\in T$, and $M$ is a sufficiently large positive integer.
The conclusion of the theorem follows from a quantitative version of Hilbert's irreducibility Theorem (see \cite[Theorem~2.1]{gl12}) applied to the composite map
\begin{equation}
\label{HITcover}
\begin{CD}
X_1\times_C\dots\times_C X_r @>>> C @>\psi>> \PP^1_k.\\
\end{CD}
\end{equation}

In this final step we implicitly use the fact that $X_1\times_C\dots\times_C X_r$ is geometrically irreducible, which can be proved as follows: $H$ injects into $\Pic(C)$ according to the first statement of Theorem~\ref{thm_kummer}, and this remains true over $\overline{k}$ by injectivity of the map $\Pic(C)\rightarrow \Pic(C\times_k \overline{k})$. Hence $H$ injects into $H^1(C\times_k \overline{k},\mu_p)$ via the natural base-change map; in other terms, $X_1,\dots,X_r$ remain independent over $\overline{k}$.
\end{proof}

\begin{proof}[Proof of Theorem~\ref{thm:Main}]
Given the inequalities \eqref{inequality1} and \eqref{inequality2} from Theorem~\ref{thm_kummer}, the result follows from Theorem~\ref{thmclassgp} combined with the following fact: when applying Hilbert's irreducibility theorem to the cover \eqref{HITcover}, it is always possible to ensure that the field extension obtained is linearly disjoint from a given extension fixed in advance. If one fixes the latter extension to be the compositum of the fields that correspond to torsors in $H^1(\mathcal{O}_k,\mu_p)$, then the natural map $H^1(\mathcal{O}_k,\mu_p)\to H^1(\mathcal{O}_{k(P_{t,\psi})},\mu_p)$ is injective, and it follows from Kummer theory (see the proof of Theorem~1.3 in \cite{bg16}) that
\begin{equation*}
\begin{split}
\dim_{\F_p} H^1(\mathcal{O}_{k(P_{t,\psi})},\mu_p)/H^1(\mathcal{O}_k,\mu_p) = &\dim_{\F_p} \Cl(k(P_{t,\psi}))[p] -\dim_{\F_p} \Cl(k)[p]  \\
& + \rk_\Z \mathcal{O}_{k(P_{t,\psi})}^{\times}- \rk_\Z \mathcal{O}_k^{\times}.
\end{split}
\end{equation*}
To conclude, it suffices to point out that the image of the subgroup $H$ from Theorem~\ref{thmclassgp} by the map $P_{t,\psi}^*$ injects into the quotient group on the left.
\end{proof}


\subsection{Proof of Theorem~\ref{thm:cubic}}
\label{sec:Kihara}

Let $E$ be an elliptic curve over $\mathbb{Q}(t)$ without a rational $2$-torsion point over $\overline{\mathbb{Q}}(t)$.  We may assume that $E$ is defined by an equation
\begin{equation}
\label{QtEq}
y^2=x^3+a(t)x+b(t)
\end{equation}
where $a(t)$ and $b(t)$ belong to $\mathbb{Z}[t]$.

We denote by $C$ the smooth projective curve with affine equation $x^3+a(t)x+b(t)=0$, which is the smooth compactification of $\E[2]\setminus\{0\}$. We also let $\Delta(t):=-16(4a(t)^3+27b(t)^2)$ be the discriminant of the Weierstrass equation \eqref{QtEq}.

Using an appropriately chosen elliptic curve $E$ of large $\mathbb{Q}(t)$-rank, we shall apply Theorem~\ref{thm:Main} to obtain infinitely many cubic fields with a class group of large $2$-rank.

In \cite{Kih}, Kihara gives an example of an elliptic curve $E$ over $\mathbb{Q}(t)$ of rank at least $14$.  A calculation shows that in Kihara's example, $\E$ contains singular fibers (over $\PP^1_{\mathbb{Q}}$) of types $\mathrm{I}_2$, $\mathrm{I}_4$ (two fibers), and $\mathrm{I}_6$.  The elliptic curve $E$ is obtained as a specialization of a $3$-dimensional family of elliptic curves having rank at least $12$.  By using a different specialization of this family, we may obtain a more advantageous singular fiber configuration (with  Theorem~\ref{thm:Main} in mind) at the expense of (possibly) lowering the  $\mathbb{Q}(t)$-rank.

Specifically, let $E'$ be the elliptic curve over $\mathbb{Q}(p,q,u)$ of rank at least $12$ constructed in \cite{Kih} (in fact, Kihara gives a genus one quartic with $13$ $\mathbb{Q}(p,q,u)$-points; one must choose one of the $13$ points as a base point for the elliptic curve).  Let $E$ be the elliptic curve over $\mathbb{Q}(t)$ obtained from $E'$ by the specialization $p=t$, $q=t+6$, $u=t+1$. Then one can check numerically that there is no prime of universal bad reduction for $E$.

We may write the equation of $E$ in the form \eqref{QtEq}, where the discriminant $\Delta(t)$ is irreducible in $\mathbb{Q}[t]$ of degree $96$.  There is exactly one bad place in $\PP^1_{\mathbb{Q}}$, where $\E$ has a singular fiber of type $I_1$.
 
By specializing Kihara's points, at say $t=1$, and computing the associated height pairing, it is easily verified that $E$ has rank at least $12$ over $\mathbb{Q}(t)$.

Then, according to Theorem~\ref{thm_kummer}, we have
$$
\dim_{\F_2} \Pic(C)[2] \geq \rk_{\Z} E(\Q(t)) \geq 12.
$$

One also checks that there exists a rational value of $t$ for which the equation $x^3+a(t)x+b(t)=0$ has a single real root. By variants of Theorem~\ref{thmclassgp} (resp. Theorem~\ref{thm:Main}) taking into account the places at infinity, it follows that there exists a trigonal morphism $\phi:C\to\PP^1$ such that for all but $O(\sqrt{N})$ integers $t\in\mathbb{Z}$ with $|t|\leq N$, $\mathbb{Q}(P_t)$ is a cubic number field with exactly one real place and
$$
\dim_{\F_2} \Cl(\Q(P_t)) \geq \rk_{\Z} E(\Q(t)) -\rk_{\Z} \OO_{\mathbb{Q}(P_t)}^\times \geq 11.
$$

\begin{remark}
One may possibly go further using famous constructions of Elkies. More precisely, Elkies describes in \cite{Elkies07} constructions of elliptic curves over $\Q(t)$ of ranks $17$ and $18$.  The details of these constructions (e.g., explicit equations for the curves) remain unpublished.  This example is likely to lead to further applications of our techniques.
\end{remark}


\section*{Acknowledgments}

The first author was supported in part by the CIMI Excellence program while visiting the \emph{Centro di Ricerca Matematica Ennio De Giorgi} during the autumn of 2017. The second author was supported in part by NSF grant DMS-1352407.


\providecommand{\bysame}{\leavevmode\hbox to3em{\hrulefill}\thinspace}
\providecommand{\MR}{\relax\ifhmode\unskip\space\fi MR }


\end{document}